\newcommand{\N}{\mathbb{N}}
\newcommand{\CC}{\mathcal{C}}
\newcommand{\F}{\mathcal{F}}
\newcommand{\labeled}{\mathcal{L}}
\def\P{\mathcal{P}}
\newcommand{\Q}{\mathcal{P}}
\newcommand{\graphs}{\mathcal{U}}
\newcommand{\unlabeled}{\mathcal{U}}
\newcommand{\diff}{\mathrm{d}}
\newcommand{\abs}[1]{\lvert #1 \rvert}
\newcommand{\limitsx}[1]{\widehat{#1}}
\newcommand{\maxlimits}[1]{\widehat{#1}^*}
\DeclareMathOperator{\col}{col}
\DeclareMathOperator{\Ent}{Ent}
\DeclareMathOperator{\Forb}{Forb}
\newcommand\set[1]{\ensuremath{\{#1\}}}
\newcommand\oi{[0,1]}
\newcommand\ooi{[0,1)}
\newtheorem{theorem}{Theorem}
\newtheorem{lemma}[theorem]{Lemma}
\theoremstyle{remark}
\newtheorem{remark}[theorem]{Remark}
\title{On the Typical Structure of Graphs in a Monotone Property}
\author{Svante Janson \and Andrew J. Uzzell}
\address{Department of Mathematics, Uppsala University, P.O.~Box 480, SE-751 06 Uppsala, Sweden}
\email{\href{mailto:svante.janson@math.uu.se}{svante.janson@math.uu.se}}
\email{\href{mailto:andrew.uzzell@math.uu.se}{andrew.uzzell@math.uu.se}}
\date{April 9, 2014}
\thanks{Partly supported by the Knut and Alice Wallenberg Foundation.}
\begin{document}

\begin{abstract}
Given a graph property $\P$, it is interesting to determine the typical structure of graphs that satisfy $\P$.  In this paper, we consider monotone properties, that is, properties that are closed under taking subgraphs.  Using results from the theory of graph limits, we show that if $\P$ is a monotone property and $r$ is the largest integer for which every $r$-colorable graph satisfies $\P$, then almost every graph with~$\P$ is close to being a balanced $r$-partite graph.
\end{abstract}

\maketitle

\section{Introduction and main results}\label{se:intro}

Given a graph property~$\P$, it is natural to study the structure of a typical graph that satisfies $\P$.  A graph property is \emph{monotone} if it is closed under taking subgraphs and \emph{hereditary} if it is closed under taking induced subgraphs.  Thus, every monotone property is also hereditary.  Many authors have studied the structure of typical graphs in various hereditary properties---see, e.g.,~\cite{ABBM11,BBS09,EFR86,EKR76,HJS,JUstring}, as well as the survey~\cite{Bol07}. In this note, we use results from graph limit theory to study the structure of a typical graph in a general monotone property.

Before stating our main result, let us recall certain basic notions of graph limit theory.  For more details, see, e.g., \cite{BCLSV1,DJ08,LS06}, as well as the monograph~\cite{LovaszBook}.  Here, we simply recall that certain sequences of graphs are defined to be \emph{convergent}.  A convergent sequence has a limit, called a \emph{graph limit}, which is unique if it exists.

Lov\'asz and Szegedy~\cite{LS06} showed that a graph limit~$\Gamma$ may be represented by a \emph{graphon}, a symmetric, measurable function~$W : \oi^2 \to \oi$.  (So, abusing notation slightly, we will sometimes write $G_n \to W$ if the sequence $\set{G_n}_{n = 1}^{\infty}$ converges to the graph limit~$\Gamma$ represented by $W$.)  More than one graphon may represent the same graph limit; we say that the graphons $W_1$ and~$W_2$ are \emph{equivalent}, and write $W_1 \cong W_2$, if they represent the same graph limit.

Let $X_1$, $X_2, \dots$ be i.i.d.~uniformly distributed random variables in $\oi$.  Given a graphon $W$, the \emph{$W$-random graph} $G(n, W)$ is a graph with vertex set~$[n]$ in which vertices $i$ and~$j$ are adjacent with probability~$W(X_i, X_j)$, independently of all other edges.

Let $h(x) = -x \log_2(x) - (1 - x)\log_2(1 - x)$ denote the binary entropy function.  The \emph{entropy} of a graphon~$W$ is
\[
\Ent(W) = \int_0^1 \int_0^1 h\bigl(W(x,y)\bigr) \, \diff\mu(x)\diff\mu(y),
\]
where $\mu$ denotes the Lebesgue measure.  As noted in~\cite{HJS}, if $W_1 \cong W_2$, then $\Ent(W_1) = \Ent(W_2)$.  In other words, entropy is a property of a graph limit, rather than of the graphon that represents it.  Thus, we may define the entropy~$\Ent(\Gamma)$ of a graph limit~$\Gamma$ to be the entropy of any graphon that represents it.

Hatami, Janson, and Szegedy~\cite{HJS} posed the question of which graphons may arise as limits of sequences of graphs with a given property~$\P$.  In addition to the intrinsic interest of this question, it turns out that if $\P$ is hereditary, then certain limits of sequences of graphs in $\P$ (namely, those with maximum entropy) give a great deal of information about the number and typical structure of graphs in~$\P$. (We do not distinguish between a graph property and the class of graphs with that property.)  In order to state these results, we need to introduce more notation.

Let $\graphs_n$ denote the set of unlabeled graphs on $n$ vertices and let $\labeled_n$ denote the set of labeled graphs with vertex set~$[n]$.  Given a graph property~$\P$, we let $\P_n = \P \cap \graphs_n$ denote the set of unlabeled elements of~$\P$ with $n$ vertices and let $\P^L_n$ denote the set of labeled elements of~$\P$ with vertex set~$[n]$.  The function $n \mapsto \abs{\P_n}$ is called the (unlabeled) \emph{speed} of~$\P$; the labeled speed is defined similarly.  Observe that
\begin{equation}\label{eq:labeledunlabeled}
\bigl\lvert \P_n \bigr\rvert \leq \bigl\lvert \P^L_n \bigr\rvert \leq n! \bigl\lvert \P_n \bigr\rvert.
\end{equation}

Given a graph property~$\P$, we let $\limitsx{\P}$ denote the set of graph limits of sequences in $\P$.  We furthermore let $\maxlimits{\P}$ denote the set of elements of $\limitsx{\P}$ of maximum entropy, i.e.,
\[
\maxlimits{\P} = \biggl\{\Gamma \in \limitsx{\P} \, : \, \Ent(\Gamma) = \max_{\Gamma' \in \limitsx{\P}} \Ent(\Gamma')\biggr\}.
\]
We will also use these symbols to refer to the set of graphons (respectively, the set of maximum-entropy graphons) representing limits of sequences in $\P$.  It is shown in~\cite{HJS} that if $\Q$ is hereditary (and not finite), then $\max_{\Gamma \in \limitsx{\Q}} \Ent(\Gamma)$ is achieved---in other words, $\maxlimits{\Q}$ is nonempty.

In~\cite[Theorem 1.6]{HJS}, Hatami, Janson, and Szegedy showed that if a hereditary property~$\P$ has a single graph limit~$\Gamma$ of maximum entropy, then a typical element of~$\P$ is close to~$\Gamma$ (in terms of the standard cut metric on the space of graph limits).

\begin{theorem}\label{th:problimit}
Suppose that $\Q$ is a hereditary property and that $\max_{\Gamma \in \limitsx{\Q}} \Ent(\Gamma)$ is attained by a unique graph limit~$\Gamma_{\Q}$.  Then
\begin{enumerate}
\item[(i)] if $G_n \in \unlabeled_n$ is a uniformly random unlabeled element of~$\Q_n$, then $G_n$ converges in probability to~$\Gamma_{\Q}$ as $n \to \infty$;
\item[(ii)] if $G_n \in \labeled_n$ is a uniformly random labeled element of~$\Q^L_n$, then $G_n$ converges in probability to~$\Gamma_{\Q}$ as $n \to \infty$.
\end{enumerate}
\end{theorem}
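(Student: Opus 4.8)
The plan is to turn the statement into a counting estimate. Write $e:=\Ent(\Gamma_\Q)$, which by the uniqueness hypothesis equals $\max_{\Gamma\in\limitsx{\Q}}\Ent(\Gamma)$. Identifying a graph $G$ with its graphon $W_G$, fix $\varepsilon>0$ and let $B_\varepsilon(n)$ be the number of $G\in\Q^L_n$ with $\delta_\square(W_G,\Gamma_\Q)\ge\varepsilon$; the goal is to show $B_\varepsilon(n)=o\bigl(\lvert\Q^L_n\rvert\bigr)$ for every $\varepsilon>0$. For the denominator I would invoke the counting theorem of Alekseev and of Bollob\'as--Thomason, in the graphon formulation of~\cite{HJS}: $\lvert\Q^L_n\rvert\ge 2^{(e-o(1))\binom n2}$. (The matching upper bound will be re-derived below, but the lower bound is where hereditariness enters, so I take it as given.) Thus it suffices to prove $B_\varepsilon(n)\le 2^{(e-\eta+o(1))\binom n2}$ for some $\eta=\eta(\varepsilon)>0$; this is part~(ii). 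Part~(i) then follows: if $e>0$, \eqref{eq:labeledunlabeled} gives $\lvert\Q_n\rvert\ge 2^{(e-o(1))\binom n2}$ while the number of bad unlabeled graphs is at most $B_\varepsilon(n)$ (the relevant property is isomorphism-invariant); and if $e=0$, then since $\Ent\ge0$ every element of $\limitsx{\Q}$ maximises $\Ent$, so $\limitsx{\Q}=\{\Gamma_\Q\}$, and by compactness of graph-limit space \emph{every} sequence in $\Q$ converges to $\Gamma_\Q$, making (i) and (ii) immediate.

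Two soft facts feed the estimate. First, $\Ent$ is upper semicontinuous on graph-limit space for $\delta_\square$ (proved in~\cite{HJS}); together with compactness and the uniqueness of the maximiser this gives, for each $\delta>0$, that the compact set $K_\delta:=\{\Gamma\in\limitsx{\Q}:\delta_\square(\Gamma,\Gamma_\Q)\ge\delta\}$ has $\max_{K_\delta}\Ent=e-\eta(\delta)$ for some $\eta(\delta)>0$ (the maximum is attained, being that of an upper semicontinuous function on a compact set, and is strictly below $e$ since $\Gamma_\Q\notin K_\delta$ is the unique maximiser). Second, a finite graph in $\Q$ is almost in $\limitsx{\Q}$: with $\beta(n):=\max_{G\in\Q_n}\delta_\square(W_G,\limitsx{\Q})$ one has $\beta(n)\to0$, for otherwise there are $G_k\in\Q_{n_k}$ with $n_k\to\infty$ and $\delta_\square(W_{G_k},\limitsx{\Q})\ge\varepsilon_0$, and a subsequence of $(G_k)$ converges to a graph limit, which lies in $\limitsx{\Q}$ by definition --- a contradiction.

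The technical core is a local counting lemma: \emph{for every graphon $W$ and $\gamma>0$, the number of labeled graphs $G$ on $[n]$ with $\delta_\square(W_G,W)<\gamma$ is at most $2^{(\Ent(W)+\psi_W(\gamma)+o(1))\binom n2}$, where $\psi_W(\gamma)\to0$ as $\gamma\to0$.} I would prove this by regularity. Using the weak regularity lemma and upper semicontinuity of $\Ent$, pick a step function $W'$ with boundedly many parts such that $\lVert W-W'\rVert_\square<\gamma$ and $\Ent(W')\le\Ent(W)+\gamma$. If $\delta_\square(W_G,W)<\gamma$ then $\delta_\square(W_G,W')<2\gamma$; since $W'$ is a step function, a standard argument (comparing $\delta_\square$ with its partition version) yields a partition of $[n]$ into at most $k$ parts $V_1,\dots,V_k$ with $\lvert V_i\rvert=\alpha_i n+o(n)$ for the part proportions $\alpha_i$ of $W'$ and $\sum_{i,j}\bigl\lvert e_G(V_i,V_j)-a_{ij}\lvert V_i\rvert\lvert V_j\rvert\bigr\rvert\le O_k(\gamma)\,n^2$, where the $a_{ij}$ are the values of $W'$. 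One then counts such $G$ by summing over the at most $k^n=2^{o(n^2)}$ partitions and the at most $2^{o(n^2)}$ admissible density matrices $(d_{ij})$, with at most $\prod_{i\le j}\binom{\lvert V_i\rvert\lvert V_j\rvert}{d_{ij}\lvert V_i\rvert\lvert V_j\rvert}$ graphs for each; bounding this product by $2^{\sum_{i\le j}h(d_{ij})\lvert V_i\rvert\lvert V_j\rvert}$ and using the concave modulus of continuity of $h$ to pass from $(d_{ij})$ back to $(a_{ij})$ shows the exponent is $(\Ent(W')+o_\gamma(1))\binom n2+o(n^2)$. I expect this lemma to be the main obstacle: the passage from a small cut distance to a genuine vertex partition, and the entropy bound for a product of binomial coefficients governed by the non-Lipschitz function $h$, are where the real work lies; the remaining steps are routine.

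To finish, fix $\varepsilon>0$, set $\eta:=\eta(\varepsilon/2)$, cover the compact set $K_{\varepsilon/2}$ by finitely many $\delta_\square$-balls of small radius $\gamma$ with centres $\Gamma_1,\dots,\Gamma_M\in K_{\varepsilon/2}$, and put $\psi:=\max_\ell\psi_{\Gamma_\ell}$, so $\psi(\gamma)\to0$. For $n$ large and $G\in\Q^L_n$ with $\delta_\square(W_G,\Gamma_\Q)\ge\varepsilon$, there is $\Gamma'\in\limitsx{\Q}$ with $\delta_\square(W_G,\Gamma')\le\beta(n)$; then $\delta_\square(\Gamma',\Gamma_\Q)\ge\varepsilon-\beta(n)\ge\varepsilon/2$, so $\Gamma'\in K_{\varepsilon/2}$ and hence $\delta_\square(W_G,\Gamma_\ell)\le\gamma+\beta(n)$ for some $\ell$. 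By the local counting lemma and $\Ent(\Gamma_\ell)\le e-\eta$,
\[
B_\varepsilon(n)\le\sum_{\ell=1}^M 2^{(\Ent(\Gamma_\ell)+\psi(\gamma+\beta(n))+o(1))\binom n2}\le 2^{(e-\eta+\psi(2\gamma)+o(1))\binom n2}.
\]
Choosing $\gamma$ small enough that $\psi(2\gamma)<\eta/2$ gives $B_\varepsilon(n)\le 2^{(e-\eta/2+o(1))\binom n2}=o\bigl(\lvert\Q^L_n\rvert\bigr)$, which proves~(ii) and, as explained above, also~(i).
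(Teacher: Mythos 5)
The paper does not actually prove Theorem~\ref{th:problimit}; it is imported verbatim from \cite{HJS} (Theorem~1.6 there). Your architecture --- lower-bound $\lvert\Q^L_n\rvert$ by $2^{(e-o(1))\binom{n}{2}}$, upper-bound the number of graphs near the compact set $K_{\varepsilon/2}$ of non-maximising limits via upper semicontinuity of $\Ent$ together with a local counting lemma, handle the degenerate case $e=0$ separately, and pass to unlabeled graphs via \eqref{eq:labeledunlabeled} --- is exactly the strategy of the proof in that reference, and the soft steps (compactness of $K_{\varepsilon/2}$, $\beta(n)\to0$, the finite cover) are all correct.

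The one genuine gap sits inside your local counting lemma, at the spot you yourself flag as the technical core, and it is a quantitative one. You take $W'$ from the weak regularity lemma with $\lVert W-W'\rVert_\square<\gamma$, so its number of parts is $k=k(\gamma)$, which blows up as $\gamma\to0$; you then bound $\sum_{i,j}\bigl\lvert e_G(V_i,V_j)-a_{ij}\lvert V_i\rvert\lvert V_j\rvert\bigr\rvert$ by summing the cut-norm bound over all $k^2$ pairs, which gives $O(k^2\gamma)\,n^2$ (and the loss of a factor growing with $k$ in passing from the cut norm of a $k\times k$ block matrix to its entrywise $\ell^1$ norm is unavoidable in general). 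Since $k^2\gamma\not\to0$ as $\gamma\to0$, the subsequent appeal to the concave modulus of continuity of $h$ yields $\omega\bigl(O(k^2\gamma)\bigr)$, which need not be $o_\gamma(1)$, and the exponent does not close. The standard repair --- and the one used in \cite{HJS} --- is to apply weak regularity to $G$ rather than to $W$: every $G$ in the ball admits a partition of $[n]$ into at most $k(\epsilon')$ parts whose quotient step function $D_G$ satisfies $\delta_\square(D_G,W_G)\le\epsilon'$, hence $\delta_\square(D_G,W)\le\epsilon'+\gamma$; upper semicontinuity of $\Ent$ at $W$, applied directly to $D_G$ rather than entrywise to its density matrix, gives $\Ent(D_G)\le\Ent(W)+\epsilon$ once $\epsilon'+\gamma$ is below the semicontinuity radius, and the number of graphs with a prescribed partition and density matrix is $2^{\Ent(D_G)\binom{n}{2}+o(n^2)}$ by the binomial-coefficient bound you already state. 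This also eliminates the delicate extraction of a vertex partition from a near-optimal measure-preserving rearrangement. With that modification the rest of your argument goes through.
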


Now we define a special class of graphons.  All of these graphons will be defined on $\ooi^2$, rather than on $\oi^2$; it is easy to see that this change is immaterial. Given $r \in \N$ and $i \in [r]$, let $I_i = [(i - 1)/r, i/r)$ and let $E_r = \cup_{i \neq j} I_i \times I_j$.  We also let $E_{\infty} = \ooi^2$.  Given $r \in \N \cup \set{\infty}$, we let $R_r$ denote the set of graphons $W$ such that $W(x, y) = 1/2$ if $(x, y) \in E_r$ and $W(x, y) \in \set{0, 1}$ otherwise.  It is easy to see that if $W \in R_r$, then
\[
\Ent(W) = \iint_{E_r} h(1/2)\, \diff\mu(x)\diff\mu(y) = \mu(E_r) = 1  - \dfrac{1}{r}.
\]
For $r \in \N$ and $0 \leq s \leq r$, we let $W^*_{r,s}$ denote the graphon in $R_r$ that equals~$1$ on $I_i \times I_i$ for $i \leq s$ and equals~$0$ on $I_i \times I_i$ for $s + 1 \leq i \leq r$.  Observe that $R_{\infty}$ consists only of the graphon that equals~$1/2$ everywhere on $\ooi^2$; for notational convenience, we denote this graphon by $W^*_{\infty, 0}$.

Given $r \in \N$ and $0 \leq s \leq r$, we let $\CC(r, s)$ denote the class of graphs whose vertex sets can be partitioned into $s$ (possibly empty) cliques and $r - s$ (possibly empty) independent sets.  In particular, $\CC(r, 0)$ is the class of $r$-colorable graphs.  Observe that for each $r$ and~$s$, the class~$\CC(r,s)$ is hereditary, and that $\CC(r, 0)$ is monotone.

It is shown in~\cite[Theorem 1.9]{HJS} that if $\P$ is a hereditary property, then the maximum entropy of an element of~$\limitsx{\P}$ takes one of countably many values, and furthermore that this value determines the asymptotic speed of~$\P_n$.

\begin{theorem}\label{th:speedlimit}
If $\P$ is a hereditary property, then there exists $r \in \N \cup \set{\infty}$ such that  $\max_{\Gamma \in \limitsx{\P}} \Ent(\Gamma) = 1 - 1/r$ and such that every graph limit $\Gamma \in \maxlimits{\P}$ can be represented by a graphon~$W \in R_r$.  Moreover,
\begin{equation*}\label{eq:speedlimit}
\abs{\P_n} = 2^{\bigl(1 - \frac{1}{r} + o(1) \bigr) \binom{n}{2}}.
\end{equation*}
\end{theorem}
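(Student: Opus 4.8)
The plan is to work throughout with the \emph{colouring number} $r=r(\P)\in\N\cup\set{\infty}$ of $\P$, defined as the largest $r$ for which $\CC(r,s)\subseteq\P$ holds for some $s\in\set{0,\dots,r}$ (with $r(\P)=\infty$ if this holds for every $r$). Since $\CC(r-1,s)\subseteq\CC(r,s)$ and $\CC(r-1,s-1)\subseteq\CC(r,s)$, the set of admissible $r$ is an initial segment of $\N$, so $r(\P)$ is well defined; and if $\P$ is infinite then, by Ramsey's theorem, $\P$ contains either all cliques or all independent sets, so $r(\P)\ge 1$ (the statement being vacuous for finite $\P$). We shall prove that this $r$ is the one in the statement by establishing, in order: (i)~$\max_{\Gamma\in\limitsx\P}\Ent(\Gamma)=1-1/r$; (ii)~every $\Gamma\in\maxlimits\P$ is represented by a graphon in $R_r$; and (iii)~$\abs{\P_n}=2^{(1-1/r+o(1))\binom n2}$. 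A recurring tool is the elementary fact that, since sampling commutes with graph limits and each $\P_n$ is finite (hence closed), $W\in\limitsx\P$ implies that $G(n,W)$ is supported on $\P_n$ for every $n$; in particular, every finite graph with positive induced density in some $W\in\limitsx\P$ lies in $\P$.

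The lower bounds are routine. Fix a partition of $[n]$ into $r$ nearly equal parts, make $s$ of them cliques and the remaining $r-s$ independent sets, and fill in the bipartite graph between each pair of parts arbitrarily: this produces $2^{(1-1/r-o(1))\binom n2}$ pairwise distinct labelled graphs, all in $\CC(r,s)\subseteq\P$, so $\abs{\P^L_n}\ge 2^{(1-1/r-o(1))\binom n2}$, and the same bound holds for $\abs{\P_n}$ by~\eqref{eq:labeledunlabeled}. If instead the bipartite graphs between parts are chosen at random, then almost surely the resulting sequence converges to $W^*_{r,s}$, whence $W^*_{r,s}\in\limitsx\P$ and $\max\Ent\ge\Ent(W^*_{r,s})=1-1/r$. (When $r=\infty$ one simply lets $r\to\infty$ in both statements.)

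The heart of the argument---and the step I expect to be the main obstacle---is the reverse inequality in~(i): every $W\in\limitsx\P$ has $\Ent(W)\le 1-1/r$. Equivalently, if $\Ent(W)\ge 1-1/R$ then $\CC(R,s)\subseteq\P$ for some $s$, forcing $R\le r$. To see this, apply Szemer\'edi's regularity lemma to $W$, obtaining a partition of $[0,1]$ into $k$ parts of equal measure with associated densities $d_{ij}$, and form the auxiliary graph on $[k]$ joining two parts precisely when they form an $\varepsilon$-regular pair of density in $(\delta,1-\delta)$. By concavity of $h$ and Jensen's inequality, $\frac1{k^2}\sum_{i,j}h(d_{ij})\ge\Ent(W)\ge 1-1/R$; since $h\le 1$ everywhere while $h(t)\le 1-\eta(\delta)$ for $t\notin(\delta,1-\delta)$, with $\eta(\delta)\to 1$ as $\delta\to 0$, this forces all but a $\bigl(1/(R\,\eta(\delta))+O(\varepsilon)\bigr)$-fraction of the ordered pairs $(i,j)$ to be regular with $d_{ij}\in(\delta,1-\delta)$. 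Choosing $\delta$ and $\varepsilon$ small enough that this fraction is below the Tur\'an threshold $1/(R-1)$, Tur\'an's theorem produces a $K_R$ in the auxiliary graph---that is, $R$ parts pairwise joined by $\varepsilon$-regular blocks of density bounded away from $0$ and $1$. Passing to a subpart of each (a Ramsey/pigeonhole argument keeps all $\binom R2$ cross-densities bounded away from $0$ and $1$ at once), each of these $R$ parts can be classified according to whether it can host arbitrarily large induced cliques, or arbitrarily large induced independent sets, with positive probability in $G(m,W)$; fixing such a choice for $s$ of them, one checks that every graph in $\CC(R,s)$ on a bounded number of vertices has positive induced density in $W$, hence lies in $\P$. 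Thus $\CC(R,s)\subseteq\P$ and $R\le r$. The delicate point is controlling, simultaneously and via regularity, the $\binom R2$ cross-blocks and the diagonal behaviour of the chosen $R$ parts.

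Finally, (ii) and the upper bound in~(iii). For~(ii), let $W\in\maxlimits\P$ and let $\widetilde W$ be obtained from $W$ by replacing every value in $(0,1)$ by $1/2$. The support of $G(n,\widetilde W)$ depends only on the partition of $[0,1]^2$ into $\set{W=0}$, $\set{W=1}$ and $\set{0<W<1}$, which is unchanged, so $G(n,\widetilde W)$ is supported on $\P_n$ and $\widetilde W\in\limitsx\P$; since $\Ent(\widetilde W)\ge\Ent(W)$, with equality only if $W$ is a.e.\ $\set{0,\tfrac12,1}$-valued, maximality forces $W$ to be a.e.\ $\set{0,\tfrac12,1}$-valued. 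Running the regularity argument above at the extremal value $\Ent(W)=1-1/r$ then pins down the structure, since the Tur\'an and entropy inequalities are simultaneously tight only for a partition into (essentially) $r$ equal parts with density $1/2$ between them and density $0$ or $1$ within them; a cleaning/compactness step upgrades this to an exact description, so that, after a measure-preserving relabelling, $W$ equals $1/2$ on $E_r$ and lies in $\set{0,1}$ on $\bigcup_i I_i\times I_i$, i.e.\ $W\in R_r$. For the upper bound in~(iii), suppose for contradiction that $\abs{\P^L_{n_k}}\ge 2^{(1-1/r+\varepsilon)\binom{n_k}2}$ along some subsequence, and take $G_k$ uniform in $\P^L_{n_k}$, so that its Shannon entropy equals $\log_2\abs{\P^L_{n_k}}$. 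Passing to a further subsequence, $G_k$ converges in distribution to a (possibly random) graph limit $\mathbf W$; by closedness of $\limitsx\P$ we have $\mathbf W\in\limitsx\P$ almost surely, hence $\Ent(\mathbf W)\le 1-1/r$ by~(i). But the normalised Shannon entropy of a convergent sequence of random graphs is asymptotically at most the expected entropy of the limit, so $1-1/r+\varepsilon\le\mathbb{E}\bigl[\Ent(\mathbf W)\bigr]\le 1-1/r$, a contradiction. Therefore $\abs{\P^L_n}\le 2^{(1-1/r+o(1))\binom n2}$, and combining this with~\eqref{eq:labeledunlabeled} and the lower bound gives $\abs{\P_n}=2^{(1-1/r+o(1))\binom n2}$.
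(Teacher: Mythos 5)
First, a point of reference: the paper does not prove Theorem~\ref{th:speedlimit} at all --- it is quoted from Hatami, Janson, and Szegedy \cite[Theorem~1.9]{HJS}, whose proof rests on the Alekseev--Bollob\'as--Thomason theorem (identifying the speed of a hereditary property as $2^{(1-1/r+o(1))\binom{n}{2}}$, with $r$ exactly the colouring number you define) together with a nontrivial entropy--speed correspondence for convergent sequences of random graphs. Your architecture is the right one, and several pieces are genuinely correct: the choice of $r$, the lower bounds, the reduction of maximum-entropy graphons to a.e.\ $\set{0,\tfrac12,1}$-valued ones via the support/randomization trick, and the subsequence-plus-entropy argument bounding $\abs{\P^L_n}$ from above (granting the fact that the normalised Shannon entropy of a convergent sequence of random graphs is asymptotically at most the expected entropy of the limit --- itself a theorem of \cite{HJS}, not a triviality, but legitimate to quote).

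The two steps you yourself flag as delicate are, however, genuine gaps rather than technicalities. (a) In the reverse inequality, having $R$ parts that are pairwise $\varepsilon$-regular with cross-densities in $(\delta,1-\delta)$ does not by itself give positive \emph{induced} density for every graph in $\CC(R,s)$. You must place the cliques and independent sets of the target graph inside the diagonal blocks, and the positive-measure set of $m$-tuples realizing a large clique inside a part can be concentrated on a portion of that part far too thin for the cross-block regularity to see: regularity controls only subsets of measure at least $\varepsilon$ times the part, and says nothing about the joint distribution of an $m$-tuple constrained by the diagonal block. Making the diagonal constraints compatible with all $\binom{R}{2}$ off-diagonal patterns simultaneously is precisely the hard core of the Alekseev--Bollob\'as--Thomason argument, and ``a Ramsey/pigeonhole argument keeps all cross-densities bounded away from $0$ and $1$ at once'' does not supply it. (b) In part~(ii), the sentence ``a cleaning/compactness step upgrades this to an exact description'' asserts, without proof, a stability-plus-exactness theorem; passing from approximate extremal structure at each regularity scale to the exact conclusion $W\in R_r$ up to measure-preserving relabelling is where the real work lies. (c) A smaller but real slip: the implication ``$\Ent(W)\ge 1-1/R\Rightarrow R\le r$'' for all integers $R$ yields only $\max\Ent<1-1/(r+1)$, not $\max\Ent\le 1-1/r$, since nothing rules out $\Ent(W)\in(1-1/r,\,1-1/(r+1))$. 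This one is fixable: your bad-pair fraction is roughly $1-\Ent(W)$ while the Tur\'an threshold for finding $K_R$ is $1/(R-1)$, so the argument should be run under the weaker hypothesis $\Ent(W)>1-1/(R-1)$ and applied with $R=r+1$.

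In short, your proposal reproduces the soft parts of the known proof (the entropy--counting correspondence and the randomization reduction) but leaves unproved the combinatorial heart --- effectively a re-derivation of Alekseev--Bollob\'as--Thomason and of the exact extremal structure --- at exactly the points you identify as obstacles.
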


Given a graph~$F$, we say that a graph~$G$ is \emph{$F$-free} if no subgraph of~$G$ is isomorphic to~$F$.  Given a (possibly infinite) family of graphs~$\F$, we say that $G$ is \emph{$\F$-free} if it is $F$-free for every $F \in \F$.  Observe that for any family~$\F$, the class of $\F$-free graphs is monotone.  (Conversely, every monotone class $\P$ equals the class of $\F$-free graphs for some family~$\F$---for example, $\F = \graphs \setminus \P$.)  We write $\Forb(\F)$ for the class of $\F$-free graphs and write $\Forb(F)$ when $\F = \set{F}$.  Note in particular that $\Forb(\emptyset)$ equals the class of all unlabeled finite graphs, which we denote by $\graphs$.

The \emph{coloring number} of a family of graphs~$\F$ is
\[
	\col(\F) = \min_{F \in \F} \chi(F).
\]
In particular, we define
\begin{equation}\label{eq:emptycolnumber}
\col(\emptyset) = \infty.
\end{equation}
Our main result says that if $\col(\F) = r + 1$, then a typical element of~$\Forb(\F)$ resembles a balanced $r$-partite graph in which cross-edges are present independently with probability~$1/2$.

\begin{theorem}\label{th:Ffreelimit}
Let $\F$ be a family of graphs and let $r = \col(\F) - 1$.  If $\P = \Forb(\F)$, then as $n$ tends to~$\infty$, a sequence of uniformly random unlabeled (respectively, labeled) elements of~$\P_n$ (respectively, elements of~$\P^L_n$) converges in probability to the graph limit~$\Gamma_r$ represented by $W^*_{r, 0}$.
\end{theorem}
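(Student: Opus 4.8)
The plan is to deduce Theorem~\ref{th:Ffreelimit} from Theorem~\ref{th:problimit} by showing that $\P = \Forb(\F)$ has a \emph{unique} graph limit of maximum entropy, namely the one represented by $W^*_{r,0}$. Since $\P$ is monotone, it is hereditary, so Theorem~\ref{th:problimit} applies once uniqueness is established. By Theorem~\ref{th:speedlimit}, there is some $r' \in \N \cup \set{\infty}$ with $\max_{\Gamma \in \limitsx{\P}} \Ent(\Gamma) = 1 - 1/r'$ and every maximum-entropy limit representable by a graphon in $R_{r'}$. So the argument splits into two parts: first, identify $r' = r = \col(\F)-1$; second, show that among graphons in $R_r$ that arise as limits of $\P$-sequences, only $W^*_{r,0}$ occurs (up to equivalence).

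For the first part, the lower bound $r' \geq r$ comes from observing that every $r$-colorable graph lies in $\P$: indeed $\col(\F) = r+1$ means every $F \in \F$ has chromatic number at least $r+1$, hence no $F$ embeds as a subgraph into any $r$-colorable graph. Thus $\CC(r,0) \subseteq \P$, and since the balanced complete $r$-partite-type $W$-random graphs $G(n, W^*_{r,0})$ are $r$-colorable, $W^*_{r,0} \in \limitsx{\P}$ with $\Ent(W^*_{r,0}) = 1 - 1/r$; hence $r' \geq r$. (When $\F = \emptyset$, $r = \infty$ and $\P = \graphs$, whose unique max-entropy limit is $W^*_{\infty,0} \equiv 1/2$, handled directly.) For the upper bound $r' \leq r$, suppose $r' > r$; then by Theorem~\ref{th:speedlimit} there is a limit $\Gamma \in \maxlimits{\P}$ represented by some $W \in R_{r'}$, and $W$-random graphs $G(n,W)$ converge to $\Gamma$. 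I would argue that such graphs a.a.s.\ contain every fixed $r'$-colorable graph, in particular some $F \in \F$ with $\chi(F) = r+1 \le r'$ (such an $F$ exists by definition of $\col$), contradicting $G(n,W) \in \P$. The key point is that a graphon in $R_{r'}$ restricted to any single block $I_i \times I_j$ with $i \ne j$ is constant $1/2$, so across any $r' \ge r+1$ blocks the $W$-random graph contains a quasirandom blow-up that hosts any fixed graph of chromatic number $\le r'$; a short subsequence/averaging argument (or a direct second-moment / union-bound computation) pins down that $F$ appears with probability tending to $1$, so $\Gamma$ cannot be a limit of $\P$-graphs.

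For the second part — uniqueness within $R_r$ — I would show that any $W \in R_r \cap \limitsx{\P}$ must be equivalent to $W^*_{r,0}$, i.e.\ $W$ cannot be $1$ on any block $I_i \times I_i$. Suppose $W \in R_r$ is $1$ on $I_1 \times I_1$ and is a limit of graphs $G_n \in \P$. Then $G(n,W)$ (or, via Theorem~\ref{th:problimit}-type reasoning, the random $G_n$ themselves) a.a.s.\ contain a large clique on the first block together with a quasirandom $(r-1)$-partite structure on the remaining blocks. Take $F \in \F$ with $\chi(F) = r+1$. Embed $F$ by putting one color class into the clique-block $I_1$ (any set of vertices works since that block is complete) and the other $r$ color classes into $I_2, \dots, I_{r+1}$ — wait, there are only $r$ blocks total, so this gives $r$ classes; but the clique block can absorb \emph{two} color classes of $F$ simultaneously (a clique contains every bipartite-and-more graph on a bounded vertex set, in fact every graph). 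Hence a clique-block plus $r-1$ independent-type blocks already suffices to host any graph of chromatic number $\le r+1$: assign two color classes of $F$ to the complete block $I_1$ and one each to $I_2, \dots, I_r$. Since within $I_1$ all pairs are present and between distinct blocks pairs are present with probability $1/2$, a standard appearance argument shows $F \subseteq G(n,W)$ a.a.s., contradicting $\P$-membership. Therefore no block of $W$ is a clique block, forcing $W \cong W^*_{r,0}$, so $\maxlimits{\P} = \set{\Gamma_r}$, and Theorem~\ref{th:problimit} finishes the proof.

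The main obstacle is making the ``$W$-random graph a.a.s.\ contains $F$'' steps fully rigorous in the right probabilistic model: $\Forb(\F)$-membership is a property of the \emph{random graph $G_n$ uniform in $\P_n$}, not literally of $G(n,W)$, so one must transfer appearance statements along convergence. The clean way is to invoke the counting/testing machinery behind Theorems~\ref{th:problimit} and~\ref{th:speedlimit}: if $\Gamma$ were a max-entropy limit and $G_n$ uniform in $\P_n$ converged to it (in probability, by Theorem~\ref{th:problimit}(i)), then subgraph densities of $G_n$ converge to those in $\Gamma$, and a graphon in $R_{r'}$ with $r' \ge r+1$ or with a clique block has strictly positive homomorphism density of some $F \in \F$ — so $G_n$ would contain $F$ with probability bounded away from $0$, contradicting $G_n \in \P_n$. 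Thus the real content is the elementary but slightly fiddly claim that $\operatorname{hom}(F, W) > 0$ for every $W \in R_{r'}$ ($r' \ge r+1$) and for every $W \in R_r$ with a clique block, whenever $\F$ contains a graph $F$ with $\chi(F) \le r+1$; this is where the ``two color classes fit in one clique block'' observation does the work.
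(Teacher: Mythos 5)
Your overall strategy---reduce to uniqueness of the maximum-entropy graph limit and then invoke Theorem~\ref{th:problimit}---is the same as the paper's, and your identification of $r'=r$ via positivity of homomorphism densities is a sound, somewhat more hands-on substitute for the paper's appeal to the fact that $W^*_{u,0}\in\limitsx{\P}$ if and only if $\CC(u,0)\subseteq\P$. The ``transfer'' issue you worry about at the end is actually unproblematic and needs no appeal to Theorem~\ref{th:problimit}: $W\in\limitsx{\P}$ means by definition that some sequence $G_n\in\P_n$ converges to $W$, so $t(F,G_n)\to t(F,W)$, and positivity of $t(F,W)$ forces an injective copy of $F$ in $G_n$ for large $n$, contradicting $G_n\in\Forb(\F)$. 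This part of your argument (including the case $r'=\infty$ with $\F\neq\emptyset$) goes through, because it uses only the off-diagonal blocks, where every $W\in R_{r'}$ is identically $1/2$.

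The genuine gap is in the uniqueness step. A graphon $W\in R_r$ is required to be $\{0,1\}$-valued on the diagonal blocks $I_i\times I_i$, but only \emph{measurably} so; the negation of $W\cong W^*_{r,0}$ is $\mu(W=1)>0$, not that some diagonal block is entirely a ``clique block.'' Your embedding of two color classes of $F$ into $I_1$ uses that all pairs inside $I_1$ are present, which fails if $\{W=1\}\cap(I_1\times I_1)$ has positive measure but contains no product set $A\times A$ of positive measure (a half-graph-type set, for instance). To close the gap you need either a supersaturation input---$t(K_{a,b},U)\geq t(K_2,U)^{ab}$, so positive edge density inside a block already yields positive density of the bipartite graph spanned between your two color classes, hence $t(F,W)>0$---or the paper's cleaner route, which avoids embeddings altogether: a monotonicity lemma (if $W\in\limitsx{\Q}$ for monotone $\Q$ and $W'\leq W$ pointwise, then $W'\in\limitsx{\Q}$, proved by coupling $W$-random graphs) shows that $\min\{W,1/2\}\in\limitsx{\P}$, and since $h(1/2)>h(1)$ this graphon has entropy strictly greater than $1-1/r$ whenever $\mu(W=1)>0$, contradicting maximality. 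The same lemma also gives the paper's one-line verification that $W^*_{r,0}$ itself is a maximum-entropy limit and its treatment of the $t=\infty$ case, so you may find it worth isolating as a separate statement.
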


Note that the quantity~$r$ in the statement of the theorem also equals the largest integer~$t$ for which every $t$-colorable graph is in $\Forb(\F)$.

It follows from Theorems \ref{th:speedlimit} and~\ref{th:Ffreelimit} that if $\col(\F) = r + 1$ then
\begin{equation}\label{eq:ForbFspeed}
\bigl\lvert \Forb(\F)_n \bigr\rvert = 2^{\bigl(1 - \frac{1}{r} + o(1) \bigr) \binom{n}{2}}.
\end{equation}
Let us note that Balogh, Bollob\'as, and Simonovits~\cite{BBS04} obtained a fairly sharp bound on the error term in~\eqref{eq:ForbFspeed}.

\begin{remark}
The proof of Theorem~\ref{th:Ffreelimit} shows that if $r = \col(\F) - 1$ and $\P = \Forb(\F)$, then $W^*_{r,0}$ is the unique maximum-entropy element of~$\limitsx{\P}$.  For certain families~$\F$, it is also possible to describe the set of all $\F$-free graph limits.  For example, the set of limits of bipartite graphs is determined in~\cite[Example 2.1]{HJS}, and a very similar argument holds for $r$-partite graphs when $r \geq 3$.  However, we know of no representation of the set of all $\F$-free graph limits for arbitrary $\F$.
\end{remark}

\begin{remark}
Erd\H{o}s, Frankl, and R\"{o}dl~\cite{EFR86} showed that if $\chi(F) = r + 1$, then every $F$-free graph $G$ may be made $K_{r+1}$-free by removing $o(n^2)$ edges from $G$.  This result is similar in spirit to Theorem~\ref{th:Ffreelimit}, but we see no direct implication: if $\set{G_n}_{n = 1}^{\infty}$ is a sequence of uniformly random $F$-free graphs and $\set{G'_n}_{n = 1}^{\infty}$ is the sequence of resulting $K_{r+1}$-free graphs, then the distribution of $G'_n$ need not be uniform in $\Forb(K_{r+1})_n$.
\end{remark}

\begin{remark}
Theorem~\ref{th:Ffreelimit} says that if $\col(\F) = r + 1$ then almost every (labeled or unlabeled) $\F$-free graph is close to a balanced $r$-partite graph.  (Conversely, every $r$-partite graph is trivially $\F$-free.) In the case of labeled graphs, Pr\"{o}mel and Steger~\cite{PS92} proved a much stronger result for a specific class of monotone properties: they characterized the graphs~$F$ for which almost every labeled $F$-free graph \emph{is} $(\chi(F) - 1)$-partite.  Given a graph~$F$, we say that $e \in E(F)$ is \emph{critical} if $\chi(F - e) < \chi(F)$.  Pr\"{o}mel and Steger showed that if $\chi(F) = r + 1$ then
\[
\bigl\lvert \Forb(F)^L_n \bigr\rvert = \bigl(1 + o(1)\bigr) \bigl\lvert \CC(r, 0)^L_n \bigr\rvert
\]
if and only if $F$ contains a critical edge.  They also showed that if $F$ does not contain a critical edge, then there exists a constant~$c_r > 0$ not depending on~$F$ such that
\begin{equation}\label{eq:nocriticaledge}
\bigl\lvert \Forb(F)^L_n \bigr\rvert \geq c_r n \bigl\lvert \CC(r, 0)^L_n \bigr\rvert
\end{equation}
for all $n$~sufficiently large.  Theorem~\ref{th:Ffreelimit} shows that if $\F$ is any family of graphs with $\col(\F) = r + 1$, then $\Forb(\F)^L$ and $\CC(r, 0)^L$ have roughly the same asymptotic speed.  Note that this result does not contradict~\eqref{eq:nocriticaledge} when $\F = \{F\}$ and $F$ does not contain a critical edge: if $\chi(F) = r + 1$, then, in view of \eqref{eq:labeledunlabeled} and  \eqref{eq:ForbFspeed}, Theorem~\ref{th:Ffreelimit} implies the weaker statement that $\abs{\Forb(F)^L_n}$ and $\abs{\CC(r, 0)^L_n}$ differ by a factor of~$2^{o(n^2)}$.
\end{remark}

\section{Proof of Theorem \ref{th:Ffreelimit}.}\label{se:proof}

\begin{lemma}\label{le:monotonicity}
Let $\Q$ be a monotone property and let $W \in \limitsx{\Q}$.  If $W'$ is a graphon such that $W' \leq W$ pointwise, then $W' \in \limitsx{\Q}$.
\end{lemma}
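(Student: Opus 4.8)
The plan is to combine the standard \emph{sampling} description of membership in $\limitsx{\Q}$ with a coupling between the $W'$-random graph and the $W$-random graph. Two standard facts from graph limit theory will be used: (a) for any graphon $U$, the random graph $G(m,U)$ converges to $U$ almost surely as $m\to\infty$; and (b) if $G_n\to U$, then for each fixed $m$ the subgraph of $G_n$ induced by a uniformly random $m$-subset of its vertices converges in distribution to $G(m,U)$. I will also use that $G(m,U)$ is precisely the $m$-vertex sample of $U$ and that every monotone property is hereditary.

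First I would show that $W\in\limitsx{\Q}$ implies $G(m,W)\in\Q$ almost surely for every $m\in\N$. Pick graphs $G_n\in\Q$ with $G_n\to W$, and for a fixed $m$ let $H_n$ be the subgraph of $G_n$ induced by a uniformly random $m$-subset of $V(G_n)$. By~(b), $H_n$ converges in distribution to $G(m,W)$; since $\Q$ is hereditary, $H_n\in\Q$ for every $n$, and since there are only finitely many graphs on $m$ vertices, convergence in distribution forces $\Pr[G(m,W)\cong F]=\lim_n\Pr[H_n\cong F]=0$ for every $m$-vertex graph $F\notin\Q$. Hence $G(m,W)\in\Q$ almost surely.

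Next I would transfer this to $W'$ by coupling. Put all of the relevant random graphs on one probability space: let $X_1,X_2,\dots$ be i.i.d.\ uniform on $\ooi$ and let $(U_{ij})_{i<j}$ be an independent family of uniform $\ooi$ variables, and for each $m$ include the edge $ij$ in $G(m,W')$ precisely when $U_{ij}<W'(X_i,X_j)$ and in $G(m,W)$ precisely when $U_{ij}<W(X_i,X_j)$. Since $W'\le W$ pointwise, $G(m,W')$ is obtained from $G(m,W)$ by deleting edges, so $G(m,W')\subseteq G(m,W)$; by the previous paragraph $G(m,W)\in\Q$ almost surely, and since $\Q$ is monotone we get $G(m,W')\in\Q$ almost surely, for every $m$. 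Intersecting these countably many almost sure events with the almost sure event from~(a) that $G(m,W')\to W'$ as $m\to\infty$, I obtain one outcome for which $G(m,W')\in\Q$ for all $m$ and $G(m,W')\to W'$. This exhibits a sequence of graphs in $\Q$ converging to $W'$, whence $W'\in\limitsx{\Q}$.

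I do not anticipate a genuine obstacle. The only steps requiring care are the appeals to the two standard graph-limit facts~(a) and~(b)---legitimate given the paper's stated reliance on graph limit theory---and the bookkeeping needed to conclude $G(m,W')\in\Q$ simultaneously for all $m$, which is why all the randomness is fixed on a single probability space at the outset.
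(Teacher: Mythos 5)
Your proof is correct and follows essentially the same route as the paper: couple the samples so that $G(m,W')\subseteq G(m,W)$, use that $G(m,W)\in\Q$ almost surely, apply monotonicity, and conclude from the almost sure convergence $G(m,W')\to W'$. The only difference is that you prove the intermediate fact that $W\in\limitsx{\Q}$ implies $G(m,W)\in\Q$ almost surely via the sampling/induced-subgraph argument, whereas the paper simply cites this from the literature.
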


\begin{proof}
Consider the sequences of random graphs $\{G(n, W)\}_{n=1}^{\infty}$ and~$\{G(n, W')\}_{n=1}^{\infty}$.  Since $W' \leq W$ pointwise, a standard argument shows that the two sequences can be coupled so that for each~$n$, $G(n, W') \subseteq G(n, W)$ almost surely.  It is shown in~\cite[Theorem 3.1]{Jan13b} that if $W \in \limitsx{\Q}$ then, for each $n$, $G(n, W) \in \Q$ almost surely.  It follows that for each $n$, we almost surely have $G(n, W') \in \Q$, as well.  Finally, it is shown in~\cite[Theorem 4.5]{BCLSV1} that $G(n, W') \to W'$ almost surely as $n \to \infty$, which implies that $W' \in \limitsx{\Q}$, as claimed.
\end{proof}

Now we prove our main result, Theorem~\ref{th:Ffreelimit}.

\begin{proof}[Proof of Theorem~\ref{th:Ffreelimit}.]
We begin by showing that, up to equivalence of graphons, $\limitsx{\P}$ contains a unique element of maximum entropy.  By Theorem~\ref{th:speedlimit}, there exists~$t \in \N \cup \set{\infty}$ such that $\maxlimits{\P} \subseteq R_t$ up to equivalence of graphons.

First, suppose that $t < \infty$.  Observe that if $W \in \maxlimits{\P} \cap R_t$, then $W \geq W^*_{t,0}$ pointwise, which by Lemma~\ref{le:monotonicity} implies that $W^*_{t,0} \in \maxlimits{\P}$.  We claim that, up to equivalence of graphons, $W^*_{t,0}$ is in fact the only maximum-entropy element of~$\limitsx{\P}$.  Indeed, let $W' \in \maxlimits{\P} \cap R_t$ and suppose that $\mu(W' = 1) > 0$.  But then Lemma~\ref{le:monotonicity} implies that $\min\{W', 1/2\}$ is a graphon in $\limitsx{\P}$ with entropy strictly larger than~$1 - 1/t$, which contradicts the definition of~$t$.  Thus, $W' = W^*_{t, 0}$ a.e.; in particular, $W' \cong W^*_{t, 0}$.  If $t = \infty$, then (up to equivalence) $\maxlimits{\P}$ must consist of the sole element of~$R_{\infty}$, that is, the graphon~$W^*_{\infty, 0}$.

Now we show that $t = \col(\F) - 1 = r$.  Suppose that $t < \infty$.  It is observed in~\cite[Remark 1.10]{HJS} that if $\P$ is hereditary and $0 \leq s \leq r < \infty$, then $W^*_{r,s} \in \limitsx{\P}$ if and only if $\CC(r, s) \subseteq \P$.  By the definition of~$\col(\F)$, it is easy to see that if $u \leq r$, then $\CC(u, 0) \subseteq \P$ and hence $W^*_{u,0} \in \limitsx{\P}$.  On the other hand, $\F$ contains some element of~$\CC(r + 1, 0)$, which implies that $\CC(r + 1, 0) \nsubseteq \P$.  This implies that $W^*_{u,0} \notin \limitsx{\P}$ when $u \geq r + 1$, and hence that $t = r$.

If $t = \infty$, then we claim that $\P = \Forb(\emptyset) = \graphs$; the conclusion then follows from~\eqref{eq:emptycolnumber}.  Suppose to the contrary that $\P$ does not contain some graph~$F$.  Then $\CC(\chi(F), 0) \nsubseteq \P$, which implies that $W^*_{\chi(F), 0} \notin \limitsx{\P}$.  However, $W^*_{\chi(F), 0} \leq W^*_{\infty, 0}$ pointwise, so Lemma~\ref{le:monotonicity} implies that $W^*_{\infty, 0} \notin \limitsx{\P}$, which is a contradiction.

Finally, since $\P$ is a hereditary property, it follows from Theorem~\ref{th:problimit} that a uniformly random (labeled or unlabeled) element of~$\P$ converges in probability to~$\Gamma_r$, as claimed.
\end{proof}

\bibliographystyle{amsplain}
\bibliography{graphlimitsbib}

\providecommand{\bysame}{\leavevmode\hbox to3em{\hrulefill}\thinspace}
\providecommand{\MR}{\relax\ifhmode\unskip\space\fi MR }
\providecommand{\MRhref}[2]{%
  \href{http://www.ams.org/mathscinet-getitem?mr=#1}{#2}
}
\providecommand{\href}[2]{#2}
\begin{thebibliography}{10}

\bibitem{ABBM11}
N.~Alon, J.~Balogh, B.~Bollob{\'a}s, and R.~Morris, \emph{The structure of
  almost all graphs in a hereditary property}, J. Combin. Theory Ser. B
  \textbf{101} (2011), no.~2, 85--110. \MR{2763071 (2012d:05316)}

\bibitem{BBS04}
J.~Balogh, B.~Bollob{\'a}s, and M.~Simonovits, \emph{The number of graphs
  without forbidden subgraphs}, J. Combin. Theory Ser. B \textbf{91} (2004),
  no.~1, 1--24. \MR{2047528 (2005b:05122)}

\bibitem{BBS09}
\bysame, \emph{The typical structure of graphs without given excluded
  subgraphs}, Random Structures Algorithms \textbf{34} (2009), no.~3, 305--318.
  \MR{2504400 (2010e:05270)}

\bibitem{Bol07}
B.~Bollob{\'a}s, \emph{Hereditary and monotone properties of combinatorial
  structures}, Surveys in {C}ombinatorics 2007, London Math. Soc. Lecture Note
  Ser., vol. 346, Cambridge Univ. Press, Cambridge, 2007, pp.~1--39.
  \MR{2252788 (2008j:05291)}

\bibitem{BCLSV1}
C.~Borgs, J.T. Chayes, L.~Lov{\'a}sz, V.T. S{\'o}s, and K.~Vesztergombi,
  \emph{Convergent sequences of dense graphs {I}: Subgraph frequencies, metric
  properties and testing}, Adv. Math. \textbf{219} (2008), no.~6, 1801--1851.

\bibitem{DJ08}
P.~Diaconis and S.~Janson, \emph{Graph limits and exchangeable random graphs},
  Rend. Mat. Appl. (7) \textbf{28} (2008), no.~1, 33--61. \MR{2463439
  (2010a:60127)}

\bibitem{EFR86}
P.~Erd{\H{o}}s, P.~Frankl, and V.~R{\"{o}}dl, \emph{The asymptotic number of
  graphs not containing a fixed subgraph and a problem for hypergraphs having
  no exponent}, Graphs Combin. \textbf{2} (1986), no.~1, 113--121 (English).

\bibitem{EKR76}
P.~Erd{\H{o}}s, D.J. Kleitman, and B.L. Rothschild, \emph{Asymptotic
  enumeration of {$K_{n}$}-free graphs}, Colloquio {I}nternazionale sulle
  {T}eorie {C}ombinatorie ({R}ome, 1973), {T}omo {II}, Accad. Naz. Lincei,
  Rome, 1976, pp.~19--27. Atti dei Convegni Lincei, No. 17. \MR{0463020 (57
  \#2984)}

\bibitem{HJS}
H.~Hatami, S.~Janson, and B.~Szegedy, \emph{Graph properties, graph limits, and
  entropy}, Preprint, \url{http://arxiv.org/abs/1312.5626}, 2013.

\bibitem{Jan13b}
S.~Janson, \emph{Graph limits and hereditary properties}, Preprint,
  \url{http://arxiv.org/abs/1102.3571v2}, 2013.

\bibitem{JUstring}
S.~Janson and A.J. Uzzell, \emph{On string graph limits and the structure of a
  typical string graph}, Preprint, \url{http://http://arxiv.org/abs/1403.2911},
  2014.

\bibitem{LovaszBook}
L.~Lov{\'a}sz, \emph{Large networks and graph limits}, Amer. Math. Soc. Colloq.
  Publ., vol.~60, Amer. Math. Soc., Providence, RI, 2012. \MR{3012035}

\bibitem{LS06}
L.~Lov{\'a}sz and B.~Szegedy, \emph{Limits of dense graph sequences}, J.
  Combin. Theory Ser. B \textbf{96} (2006), no.~6, 933--957. \MR{2274085
  (2007m:05132)}

\bibitem{PS92}
H.J. Pr{\"o}mel and A.~Steger, \emph{The asymptotic number of graphs not
  containing a fixed color-critical subgraph}, Combinatorica \textbf{12}
  (1992), no.~4, 463--473.

\end{thebibliography}

\end{document}